\documentclass[A4paper,12pt]{article}
\usepackage[utf8]{inputenc}
\usepackage{tikz}
\usepackage{url}

\usepackage{amsmath,amsthm,amscd,amssymb,eucal,mathrsfs}
\usepackage{amsfonts}
\usepackage{latexsym}
\usepackage{graphicx} 
\usepackage{fullpage}
\usepackage{multicol}
\usepackage{dsfont}
\usepackage{natbib}
\usepackage[all]{xy}
\usepackage{multirow}
\usepackage{color}

\newtheorem{thm}{Theorem}[section]

\newtheorem{cor}[thm]{Corollary}

\newtheorem{definition}[thm]{Definition}

\newtheorem{Proposition}[thm]{Proposition}

\newtheorem*{Satz*}{Satz}

\newtheorem{Lemma}[thm]{Lemma}

\newcommand{\mathset}[1]{{\left\{#1\right\}}}
\newcommand{\absolute}[1]{\left\lvert#1\right\rvert}

\DeclareMathOperator{\etale}{\acute{e}t}
\DeclareMathOperator{\Gal}{Gal}
\DeclareMathOperator{\Spec}{Spec}

\DeclareMathOperator{\GL}{GL}

\DeclareMathOperator{\supp}{supp}

\DeclareMathOperator{\Hom}{Hom}

\title{Hearing Tamagawa Factors modulo $q-1$}
\author{Patrick Erik Bradley}
\date{\today}

\begin{document}

\maketitle

\begin{abstract}
The Tamagawa factor w.r.t.\ the prime $p$ of an abelian variety $A$ over a non-archimedean local field $K$ is found to be congruent modulo $q-1$ to the wavelet eigenvalues of a $p$-adic Laplacian integral operator on the $O_K$-rational points of its N\'eron model, where $q=p^f$ is the cardinality of the residue field of $K$. The method is to express the volume of the $K$-rational points of $A$ w.r.t.\ the canonical measure in terms of the local $L$-factor given by the Frobenius  action on $\ell$-adic cohomolgy, and the Tamagawa factor; and then observe that this coincides with the Serre invariant of that compact $p$-adic analytic manifold modulo $q-1$. A previous result by  \'A.M.\ Ledezma and the author on hearing Serre invariants then yields the asserted congruence.
\end{abstract}



\section{Introduction}

According to a Theorem by Jean-Pierre Serre, a $p$-adic compact analytic manifold $X$ is ``just'' a disjoint union of copies of a $p$-adic ball, and any  number of $p$-adic balls disjointly covering $X$ is uniquely determined modulo $q-1$, where $q$ is the cardinality of the residue field of the given non-archimedean local field $K$, over which $X$ is defined as analytic manifold \cite{Serre1965}. This gives one the first impression that compact $p$-adic analytic manifolds are uninteresting apart from that quantity, called \emph{Serre invariant} by \'A.M.\ Ledezma and the author. The reality is quite contrary, as most $p$-adic analytic manifolds studied by many mathematicians are much richer in structure. 
Such structure is already encoded in an integral structure, as explained in \cite{BKL2026}. The ongoing project \cite{brad_habil} is to construct $p$-adic Laplacian integral operators using such structure in order to exhibit number-theoretic properties from $p$-adic analytic manifolds underlying the $K$-rational points of algebraic varieties via boundary value problems, among other ways. 
\newline

Previous work on Laplacian diffusion on $p$-adic domains is meanwhile quite abundant, and it is the work by W.A.\ Z\'u\~{n}iga-Galindo on $p$-adic diffusion on networks \cite{Zuniga2020}, which eventually gave the impetus to the  author's recent research on diffusion on $p$-adic analytic manifolds using  differential forms \cite{Brad_thetaDiff,Brad_heatMumfGenus}, and also actual atlantes with charts 
\cite{diffMfp,HearingSerre}. This allows to hear from the spectrum of $p$-adic Laplacian several kinds of invariants, like e.g.\ the genus, existence of $2$-torsion points, valuation parity of uniformising parameters, Serre invariants, or the number of points in the reduction of an elliptic curve.
Other lines of research (among others) on the mathematical side of $p$-adic analysis are e.g.\ scaling limit theorems \cite{PW2025,Weisbart2024} in stochastics. The dissertation \cite{AngelDiss} contains also a spectral theory of finite ultrametric spaces and applies it to the sciences. Such form a natural approximation of $p$-adic analytic processes in the case of a $p^f$-regularity of the ultrametric trees involved.  
\newline

Abelian varieties form well-studied examples of algebraic-geometry objects whose underlying compact $p$-adic manifold structure is helpful in better understanding their properties. The seminal article \cite{Oesterle1984}
studies their Tamagawa numbers also through a measure-theoretic lense. This measure is obtained via the \emph{canonical measure} on the integral points on the N\'eron model of an abelian variety. This construction of a so-called \emph{gauge measure} $\mu_\omega$ (in the language of A.\ Weil in \cite{WeilAAG}) uses a nowhere vanishing top differential form $\omega$ which exists due to compactness of the analytic manifold, cf.\ \cite{Serre1965}. It is the relationship between the Tamagawa factor $\Phi(\mathds{F}_q)$ w.r.t.\ the prime $p$ on the one hand, and the action of inertia of the separable absolute Galois group on the $\ell$-adic cohomology group $H^1_{\etale}(A_{K_s},\mathds{Q}_\ell)$ of  the base-extended abelian variety $A_{K_s}$ w.r.t.\ the separable closure $K_s$ of $K$ on the other hand, which yields the final result of this article. It uses the following integral Laplacian operator:
\[
\Delta^su(x)=\int_{X}k(x,y)(u(x)-u(y))\,d\mu_\omega(y)
\]
on the space $\mathcal{D}(X)$  of locally constant complex-valued functions on the compact $p$-adic analytic manifold $X$. The kernel function $k(x,y)$ is specified in Section 2 below and depends only on the geodetic distance, a concept developed in \cite{diffMfp}, and also explained in that Section.
\newline

The main result of this article is stated here as:
\newline

\noindent
{\bf Corollary 3.5.}
{\em Let $A/K$ be an abelian variety with N\'eron model $\mathcal{A}$ over the integers $O_K$ of $K$. The wavelet eigenvalues $\lambda_\psi$ of the $p$-adic Laplacian operator $\Delta^s$  acting on the space $\mathcal{D}(A(K))$ (for wavelets with sufficiently small support) satisfy the congruence
\[
\lambda_\psi\equiv \absolute{\Phi(\mathds{F}_q)}\mod q-1
\]
for $s\in\mathds{R}$.
}
\newline

The following Section 2 reviews the method of hearing the Serre invariant of a compact $p$-adic analytic manifold, and Section 3 applies this method in order to hear the Tamagawa factor of an abelian variety defined over $K$.
\newline

In order to fix some notation, $K$ denotes a non-archimedean local field, and $O_K$ its ring of integers. Being a locally compact abelian group, $K^n$ has a Haar measure which is in general used in order to construct other measures via differential forms, here it is an $n$-form, where $n$ the dimension of the manifold, and usually written as $\omega$. The uniformising parameter of $K$ is denoted here as $\pi$.

\section{Hearing Serre invariants}

The notion of $p$-adic analytic manifold is treated in books like \cite{Igusa2001,Serre1992,Schneider2011,WeilAAG}. There is the following important result about the number $i(X)$ of copies of $O_K^n$, into which a compact $p$-adic analytic $n$-manifold $X$ decomposes:

\begin{thm}[Serre]\label{Serre}
Let $X$ be a compact $p$-adic analytic $n$-manifold. Then
\begin{enumerate}
\item There exists a nowhere vanishing analytic differential $n$-form on $X$.
\item If $\omega$ is a nowhere vanishing analytic differential $n$-form on $X$, then
\[
i(X) \equiv a \equiv
\absolute{\omega(x)} \mod q - 1\,,
\]
where the identity
\[
\int_X \absolute{\omega} =\frac{a}{q^b}
\]
holds true for some $a, b\in\mathds{N}$.
\end{enumerate}
\end{thm}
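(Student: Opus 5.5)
The plan follows Serre's original argument in \cite{Serre1965}. The first step is a normal form for $X$. Since $X$ is compact and locally analytically isomorphic to open subsets of $K^n$, it is covered by finitely many charts. Every open subset of $K^n$ is a union of balls, and two balls are either nested or disjoint. Shrinking and refining therefore gives a finite disjoint cover of $X$ by open compact subsets $U_j$, each analytically isomorphic to a ball, and each ball is isomorphic to $O_K^n$ by an affine scaling. So $X\cong\bigsqcup_{j=1}^N O_K^n$.

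The combinatorial fact behind everything is that $O_K^n$ is the disjoint union of $q^n$ translates of $\pi O_K^n$, each isomorphic to $O_K^n$. Hence $\bigsqcup^{N} O_K^n\cong\bigsqcup^{N+q^n-1}O_K^n$. Since $q^n-1\equiv0 \bmod q-1$, only the class of $N$ modulo $q-1$ can be an invariant.

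For part (1), on each piece $U_j\cong O_K^n$ I take $\omega_j=dx_1\wedge\cdots\wedge dx_n$ in the chart coordinates. I then glue by setting $\omega|_{U_j}=\omega_j$. Because the $U_j$ are disjoint open sets, no compatibility on overlaps is needed, and $\omega$ is a nowhere vanishing analytic $n$-form.

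For part (2), let $\omega$ be any nowhere vanishing analytic $n$-form. I refine the decomposition so that on each ball $B$ the coefficient $g$ of $\omega=g\,dx_1\wedge\cdots\wedge dx_n$ has constant absolute value. This is possible because $g$ is analytic and nonvanishing, so $\absolute{g}$ is locally constant, and compactness leaves finitely many balls. Refining preserves the count of balls modulo $q-1$ by the fact above.

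Next I normalise the balls. A ball of radius $q^{-r}$ on which $\absolute{g}=q^{-s}$ has $\absolute{\omega}$-volume $q^{-rn-s}$. I split it further into $q^{n}$ subballs repeatedly, and I may also rescale charts. The aim is that after rescaling each piece is $O_K^n$ with $\absolute{\omega}$ of constant absolute value $q^{-c}$ for one common $c$. Then
\[
\int_X\absolute{\omega}=N q^{-c}, \qquad a=N,\ b=c \text{ (after cancelling powers of $q$, which are $\equiv1$ mod $q-1$)}.
\]
More intrinsically, I write $\int_X\absolute{\omega}=\sum_j q^{-e_j}$, a sum of $N$ powers of $q$. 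Multiplying by $q^b$ with $b\ge\max e_j$ gives $a=\sum_j q^{b-e_j}\equiv N \bmod q-1$, because every power of $q$ is $\equiv1$. I read the expression ``$\absolute{\omega(x)}$'' in the statement as this normalised value $a$.

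\textbf{Main obstacle: well-definedness of $i(X)$.} The difficulty is showing that two different decompositions of $X$ into $N$ and $N'$ balls give $N\equiv N'\bmod q-1$. The plan is to fix one form $\omega$, which exists by part (1), and take a common refinement of the two decompositions, so that every ball is a disjoint union of smaller balls. I then use $\omega$ to compute $q^b\int_X\absolute{\omega}$ modulo $q-1$ from each decomposition. Each count equals $a\bmod q-1$, and $a$ depends only on $\int_X\absolute{\omega}$, not on the decomposition. The same argument shows that $a$ does not depend on $\omega$. Given two forms $\omega,\omega'$, I write $\omega'=h\omega$ with $\absolute{h}$ a locally constant power of $q$ and apply the same power-of-$q\equiv1$ argument ball by ball. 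The care needed here is mostly bookkeeping: the volume of a chart ball must be a power of $q$ times $\absolute{\text{Jacobian}}$, and Jacobians of analytic changes of charts are locally constant in absolute value.
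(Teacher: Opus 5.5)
Your proposal is correct and is essentially Serre's original argument for \cite[Th\'eor\`eme 2]{Serre1965}, which is what the paper relies on (its proof is only that citation): decompose $X$ into finitely many balls, write $\int_X\absolute{\omega}$ as a sum of powers of $q$ over a suitable refinement, and conclude $a\equiv N \bmod q-1$ for every decomposition into $N$ balls and every nowhere vanishing $\omega$. The ``common $c$'' normalisation and the ``common refinement'' of two decompositions are unnecessary, and the latter would be delicate because a ball in one chart need not be a ball in another; refining each decomposition inside its own charts, as in your ``more intrinsically'' step, already gives the well-definedness of $i(X)$.
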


\begin{proof}
\cite[Th\'eor\`eme (2)]{Serre1965}.
\end{proof} 

The number $i(X)$ is what is called by
us the \emph{Serre invariant} of $X$.
\newline

Serre's result makes compact $p$-adic analytic manifolds seemingly uninteresting. However, through additional structure they do become interesting objects to study. Integral structures are such an example. For $R=O_K$, they are called $R$-structure in \cite{BKL2026}. On the vector space
$K^n$, an integral structure is the $O_K$-submodule spanned by a $K$-basis of $K^n$. This notion extends to the tangent spaces of a $p$-adic analytic manifold $X$ 
for a basis $e_1,\dots,e_n$ of $K^n\cong T_xX$ at each point $x\in X$ by  the assignment
\[
x\mapsto\Lambda_x=O_Ke_1\oplus\dots\oplus O_Ke_n\,,
\]
under the condition that it is locally constant, and this is what makes it into an integral structure $\Lambda$ on the manifold $X$. Cf.\ \cite{BKL2026} for more details. 
Taking the operation
\[
K^n\to\bigwedge^n K^n\cong K,\; e_1,\dots,e_n\mapsto e_1\wedge\dots\wedge e_n\,,
\] 
leads to a differential $n$-form on $X$ via the usual way of defining a linear form from a given vector on each $T_xX$.

\begin{definition}
An atlas $\mathcal{A}$ on a $p$-adic analytic manifold is said to be $O_K$-compatible, if for all $(U,\phi),(V,\psi)\in\mathcal{A}$ and $x\in U\cap V$ it holds true that the
derivative 
$T_x\tau_{UV} = \tau'_{UV}$ of the transition function
\[
\tau_{UV}\colon\phi(U)\to\psi(V)
\]
satisfies $T_x\tau_{UV}\in\GL_n(O_K)$.
\end{definition}

\begin{Lemma}
Every integral structure on $X$ arises from an $O_K$-compatible
atlas, and vice versa.
\end{Lemma}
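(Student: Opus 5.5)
Both directions are proved by translating between charts and frames of the tangent spaces. The guiding idea is that an integral structure is a locally constant choice of lattice $\Lambda_x\subset T_xX$. Two bases span the same $O_K$-lattice exactly when the change-of-basis matrix lies in $\GL_n(O_K)$. This is precisely the condition imposed on the derivatives of transition maps in an $O_K$-compatible atlas.

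\emph{From atlas to integral structure.} Let $\mathcal{A}$ be an $O_K$-compatible atlas. For $x\in U$ with $(U,\phi)\in\mathcal{A}$, I set
\[
\Lambda_x=(T_x\phi)^{-1}\bigl(O_K^n\bigr),
\]
which is the $O_K$-span of the coordinate frame $(T_x\phi)^{-1}e_1,\dots,(T_x\phi)^{-1}e_n$. This is independent of the chart: if also $x\in V$ with $(V,\psi)\in\mathcal{A}$, then $T_x\psi=T_x\tau_{UV}\circ T_x\phi$. Since $T_x\tau_{UV}\in\GL_n(O_K)$ maps $O_K^n$ onto itself, both charts give the same lattice. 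The assignment is locally constant because in a single chart the frame is the constant standard basis. Hence $\Lambda$ is an integral structure.

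\emph{From integral structure to atlas.} Let $\Lambda$ be an integral structure. By local constancy, every $x\in X$ has an open neighbourhood $W$ on which, after trivialising $TX$ via some chart $(W,\phi_0)$, the lattice $T\phi_0(\Lambda_y)\subset K^n$ is a fixed lattice $L=g\,O_K^n$ with $g\in\GL_n(K)$. I replace $\phi_0$ by the chart $\phi=g^{-1}\circ\phi_0$. Then $T_y\phi(\Lambda_y)=O_K^n$ for all $y\in W$. The family of all such adapted charts is an atlas. For two adapted charts, $T_x\tau_{UV}$ maps $O_K^n$ onto $O_K^n$, so it lies in $\GL_n(O_K)$. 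Thus the atlas is $O_K$-compatible, and by construction its associated integral structure from the first part is $\Lambda$ again. Conversely, starting from an atlas, passing to $\Lambda$ and back yields an atlas compatible with the original one. So the two constructions are mutually inverse up to the natural equivalence of compatible atlases.

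\emph{Main obstacle.} The delicate step is the second direction: making precise what ``locally constant'' means for a lattice-valued assignment on $TX$. It only makes sense after a local trivialisation of the tangent bundle, so one must check that it is independent of the trivialising chart. The point is that a lattice $L$ is open in $K^n$. So if $T_x\tau(L)=L$ at one point, then by continuity of the derivative of the analytic transition map, $T_y\tau$ stays in the open subgroup $\mathrm{Stab}(L)$ for $y$ near $x$. After shrinking neighbourhoods, the lattice in the new chart is therefore again locally constant. With the paper's definition (locally constant in charts), this shrinking argument settles the issue.
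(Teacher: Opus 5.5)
Your proof is correct and complete. The lattice $\Lambda_x=(T_x\phi)^{-1}(O_K^n)$ is chart-independent because $\GL_n(O_K)$ is exactly the stabiliser of $O_K^n$ in $\GL_n(K)$. Conversely, normalising charts by $g^{-1}$ gives an atlas whose transition derivatives preserve $O_K^n$, so they lie in $\GL_n(O_K)$. The original atlas is contained in the atlas of all adapted charts, which settles the round trip.

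The paper gives no argument of its own here and only refers to \cite[Lemma 3.2.6]{BKL2026}. Your frame-versus-chart translation therefore makes the lemma self-contained rather than taking a different route from an argument in the text.

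One small precision in your last paragraph concerns a general transition map $\tau$ with $T_x\tau(L)\neq L$. There you should use that the coset $T_x\tau\cdot\mathrm{Stab}(L)$ is open in $\GL_n(K)$, since $\mathrm{Stab}(L)=g\,\GL_n(O_K)\,g^{-1}$ is an open subgroup. This yields $T_y\tau(L)=T_x\tau(L)$ for $y$ near $x$. You stated only the special case $T_x\tau(L)=L$, but the argument is identical.
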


\begin{proof}
Cf.\ \cite[Lemma 3.2.6]{BKL2026} and before.
\end{proof}

Given a compact $p$-adic analytic manifold $X$ with integral structure $\Lambda$, we fix a finite $O_K$-compatible atlas $\mathcal{A}$ giving rise to $\Lambda$. Under the mild condition that each chart $(U,\phi)$ of $\mathcal{A}$ has a distinct open $U\subseteq X$, there is a simplicial complex $N(\mathcal{A})$ whose faces are given by the non-empty (finite) intersections of these sets $U$ which form an open covering of $X$. It is called the \emph{nerve complex} of $X$ associated with $\mathcal{A}$.
Assume that the nerve complex is a connected simplicial complex.
\newline

In \cite{diffMfp}, a connected nerve complex plus integral  structure $\Lambda$ have been used in order to define a \emph{geodetic distance} on $X$, denoted as $d_\Lambda(x,y)$. It is obtained through extending the nerve complex by attaching trees of $p$-adic balls of the same dimension as $X$ to each point in the face poset of $N(\mathcal{A})$. In order to understand this, first notice that since the transition maps are $O_K$-bi-analytic, they take $p$-adic balls in $K^n$ to $p$-adic balls of the same radius. In this way, the notion of \emph{ball} on $X$ becomes well-defined. Since $X$ is a disjoint union of balls (now in this sense), the tree attached to a set $U_\sigma$ belonging to a face $\sigma$ in the face poset of $N(\mathcal{A})$ is simply given by the union of maximal balls of $X$ contained in $U_\sigma$, and their now regular decomposition into $p$-adic balls (locally in a chart containing $U_\sigma$). At the boundary of this infinite partially ordered set are the ($K$-rational) points of $X$. The distance $d_\Lambda(x,y)$ is now given via the geodetic path between $x$ and $y$, and its length is the sum of the measures of the disjoint open sets associated with the vertices along the path. More details can be found in \cite{diffMfp}. The only thing required here is how the measure used is related to the integral structure $\Lambda$: namely, the wedge $e_1(x)\wedge\dots\wedge e_n(x)$ on each $\wedge^nT_xX\cong K$ defines via duality a nowhere vanishing and everywhere defined differential $n$-form $\omega$ on $X$, and this in turn, in the usual manner as explained e.g.\ in \cite[Chapters 7.4, 7.5]{Igusa2001}, defines a measure $\mu_\omega$ on the $p$-adic analytic manifold $X$. In \cite{HearingSerre} find a brief exposition of that procedure.
\newline

Having at our disposal a measure $\mu_\omega$ and a geodetic distance
$d_\Lambda$ on a compact $p$-adic analytic manifold $X$, it is now possible to define integral Laplacian operators on the space $\mathcal{D}(X)$ of 
locally constant complex-valued functions on $X$ as follows:
\begin{align}\label{Laplacian}
\Delta_w u(x)
=\int_Xw(d_\Lambda(x,y))(u(x)-u(y))\,d\mu_\omega(y)
\end{align}
for $u\in\mathcal{D}(X)$, and where
$w(d_\Lambda(x,y))$ is a kernel function depending on the distance $d_\Lambda(x,y)$.
In \cite{diffMfp}, the kernel function
\[
w(\xi)=\xi^{-s}
\]
for $\xi\in\mathds{R}_+$ and $s\in\mathds{R}$, is used, wheras it is
\begin{align}\label{SerreKernel}
w(d_\Lambda(x,y))=\begin{cases}
d_\Lambda(x,y)^{-s}=\mu_\omega(x\wedge y)^{-s},&\text{$x\wedge y$ exists}
\\
1,&\text{otherwise}\
\end{cases}
\end{align}
in \cite{HearingSerre}.
The expression $x\wedge y$ stands for the supremum in the poset of faces and balls in the extended nerve complex, where we emphasise the convention that a face of $N(\mathcal{A})$ is never considered to be a ball, even if it may be the case that it is technically ball-shaped. This is necessary, in order to have avoid confusion in general statements about balls, e.g.\ their supremum being unique (if it exists): it is either a ball or a face. Notice that suprema in the poset associated with the finite simplicial complex $N(\mathcal{A})$ need not be 
unique in general.
\newline

The  operator used in this article is $\Delta_w$ from (\ref{Laplacian}) with $w$ defined as in (\ref{SerreKernel}), and $s\in\mathds{R}$.
The result about wavelet eigenvalues for this operator, denoted henceforth as $\Delta^s$, is needed later on in this article:

\begin{Proposition}
The wavelets $\psi$ of $X$ with sufficiently small support are eigenfunctions of $\Delta^s$ with eigenvalue
\[
\lambda_\psi=\int_{X\setminus U(x)}\,d\mu_\omega
+
\int_{U(x)\setminus B}\mu_\omega(x \wedge y)^{-s}\,d\mu_\omega(y) 
+ \mu_\omega(B)^{n-s/n}
\]
with $x\in\supp(\psi)=B\subset U(x)\subseteq X$, where $U(x)$ is the largest ball in $X$ containing
$x \in X$, and with $s\in\mathds{R}$. The eigenvalue does not depend on $x\in B$.
\end{Proposition}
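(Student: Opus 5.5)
The plan is to evaluate $\Delta^s\psi$ directly, using the standard structure of a wavelet. Let $\psi$ be supported on a small ball $B$, contained in some maximal ball $U(x)$, which lies in a chart. Locally, $B$ is identified with a $p$-adic ball in $K^n$ and is regularly decomposed into its $q^n$ maximal proper subballs $B_1,\dots,B_{q^n}$. Take $\psi=\sum_j c_j\mathds{1}_{B_j}$ with $\sum_j c_j\mu_\omega(B_j)=0$, that is, with $\int_X\psi\,d\mu_\omega=0$. Since the transition maps lie in $\GL_n(O_K)$, this decomposition and the measures $\mu_\omega(B_j)=q^{-n}\mu_\omega(B)$ do not depend on the chart. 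Both "sufficiently small support" and the form of $U(x)$ are meant to guarantee that $B$ lies inside one maximal ball.

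\textbf{Step 1: $x\notin B$.} For $y\in B$, the supremum $x\wedge y$ is the same for all such $y$: it is $x\wedge B$ when $x\in U(x)$, and otherwise either a face or nonexistent. In every case the kernel $w(d_\Lambda(x,y))$ is constant in $y\in B$. We also have $u(x)=0$, so $\Delta^s\psi(x)=-c\int_B\psi\,d\mu_\omega=0$.

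\textbf{Step 2: $x\in B_j$.} Write $\psi(x)=c_j$ and split $X=(X\setminus U(x))\sqcup(U(x)\setminus B)\sqcup B$.
\begin{itemize}
\item On $X\setminus U(x)$, $\psi$ vanishes. Either $x\wedge y$ is a face or does not exist, and (following the convention of \cite{HearingSerre}) the kernel is taken to be $1$ there. This region contributes $c_j\,\mu_\omega(X\setminus U(x))$.
\item On $U(x)\setminus B$, again $\psi(y)=0$, so the contribution is $c_j\int_{U(x)\setminus B}\mu_\omega(x\wedge y)^{-s}\,d\mu_\omega(y)$. This depends only on $B$ and not on $x\in B$, because $x\wedge y=B\wedge y$ for such $y$.
\item On $B$: for $y\in B_j$ the integrand vanishes. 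For $y\in B_i$ with $i\neq j$ we have $x\wedge y=B$, so the contribution is
\[
\mu_\omega(B)^{-s}\sum_{i}\big(c_j-c_i\big)\mu_\omega(B_i)=\mu_\omega(B)^{-s}\,c_j\,\mu_\omega(B),
\]
using the mean-zero condition.
\end{itemize}

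\textbf{Step 3: assemble.} Combining the three contributions gives $\Delta^s\psi=\lambda_\psi\psi$. The last term is a power of $\mu_\omega(B)$ of the shape $\mu_\omega(B)^{1-s}$. I would reconcile it with the stated exponent by writing $\mu_\omega(B)=q^{-nk}$ and matching with the normalisation of \cite{HearingSerre}. The independence of $x$ comes from Step 2, since no term depends on which $B_j$ contains $x$.

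The main obstacle is the bookkeeping of the poset structure. First, I have to check that for $y\notin U(x)$ the supremum $x\wedge y$ is never a ball, so the kernel really is $1$ or constant there. Second, I must pin down exactly what "sufficiently small" must mean, namely that $B$ is a proper subball of a maximal ball lying in a single chart. Third, the stated exponent $n-s/n$ must be reconciled with the direct computation. I would settle this exponent by checking it against the case $X=O_K^n$ with the trivial atlas.
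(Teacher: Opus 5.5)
Your computation is the right one. The paper proves this statement only by citing \cite[Proposition 3.3]{HearingSerre}, so evaluating $\Delta^s\psi$ directly on $X\setminus U(x)$, $U(x)\setminus B$ and $B$ is exactly what is needed.

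The step that fails is the reconciliation you promise in Step 3. With the kernel (2), the contribution of $B$ is $c_j\,\mu_\omega(B)^{1-s}$, and no normalisation turns $\mu_\omega(B)^{1-s}$ into $\mu_\omega(B)^{n-s/n}$. Writing $\mu_\omega(B)=q^{-nk}$ gives $q^{-nk(1-s)}$ against $q^{-k(n^2-s)}$, and these agree for all $k$ only if $(n-1)(n+s)=0$. So the printed exponent is correct only for $n=1$, where $n-s/n=1-s$. Your proposed check on $X=O_K^n$ with the trivial atlas will simply confirm $1-s$.

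The printed exponent is most plausibly a garbled $(n-s)/n=1-s/n$. That exponent arises from a kernel $\mu_\omega(x\wedge y)^{-s/n}$, i.e.\ distance measured by radius. But then the middle term would also have to carry $-s/n$, so for $n>1$ the formula as printed is internally inconsistent. What your argument actually proves is
\[
\lambda_\psi=\mu_\omega\bigl(X\setminus U(x)\bigr)+\int_{U(x)\setminus B}\mu_\omega(x\wedge y)^{-s}\,d\mu_\omega(y)+\mu_\omega(B)^{1-s},
\]
together with the eigenfunction property and independence of $x\in B$, neither of which is affected by the exponent. You should state this and flag the discrepancy, rather than assert agreement.

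There are also some smaller points to fix.
\begin{itemize}
\item The first term needs the kernel to equal $1$ for every $y\notin U(x)$. Read literally, (2) assigns $\mu_\omega(x\wedge y)^{-s}$ whenever a supremum exists, and a face can be such a supremum. You must make ``exists'' mean ``exists as a ball'' an explicit hypothesis, rather than attributing it to a convention. Once you do, your Step 1 case $x\notin U(B)$ is immediate.
\item In Step 1, ``when $x\in U(x)$'' should read ``when $x\in U(B)$''. The constancy of $x\wedge y$ for $y\in B$ also needs its one-line reason: a ball containing $x\notin B$ and some $y\in B$ meets $B$ without lying in it, so it strictly contains $B$.
\item The description $\psi=\sum_j c_j\mathds{1}_{B_j}$ with $\int_B\psi\,d\mu_\omega=0$ should be derived from the Kozyrev formula, not assumed. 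It holds for the standard normalisation $\chi(\pi^{-d-1}\eta(j)\cdot x)$ on a ball of radius $q^{-d}$, with $\chi$ trivial on $O_K$ but not on $\pi^{-1}O_K$.
\end{itemize}
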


\begin{proof}
\cite[Proposition 3.3]{HearingSerre}.
\end{proof}

In order to create certainty about terminology: first, a \emph{Kozurev wavelet} on $K^n$ is a function
\[
\psi_{B(a),j}(x)
=\mu(B(a))^{-\frac12}\chi(\pi^{d-1}\eta(j)x)1_{B(a)}(x)\,,
\]
where $\mu$ is the Haar measure on $K^n$ s.t.\ $\mu(O_K^n)=1$, $a\in K^n$, $B(a)\subset K^n$ a ball of radius $q^{-d}$ centred in $a$,
$j\in\left[\left(O_K/\pi O_K\right)^\times\right]^n$\,,
\[
\eta\colon(O_K/\pi O_K)^n\to K^n
\]
a lift of the canonical projection 
\[
O_K^n\to(O_K/\pi O_K)^n
\]
with $\pi\in O_K$ a uniformiser of $K$,
and $\chi\colon K^n\to S^1$ a unitary additive character. A Kozyrev wavelet is locally constant with compact support. Since transition functions are $O_K$-analytic, the radius of a ball in $X$ is well-defined, and thus a wavelet on $X$ is defined as being a Kozyrev wavelet on any chart containing its support, which is a ball centred in $a\in X$ and with fixed radius. The following theorem links wavelet eigenvalues to the Serre invariant: 

\begin{thm}\label{hearSerreMfp}
 Let $X$ be a compact $p$-adic analytic manifold, $s \in\mathds{R}$, and let $\psi$ be a
 wavelet in $X$ with sufficiently small support. Then the corresponding eigenvalue $\lambda_\psi$ of $\Delta^s$ satisfies
\[
\lambda_\psi \equiv i(X)
\mod q - 1\,,
\]
i.e. the Serre invariant $i(X)$ of   $(X, \omega)$ can be read off the wavelet eigenvalues of $\Delta^s$.
\end{thm}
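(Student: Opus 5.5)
We will start from the eigenvalue formula of the Proposition (\cite[Proposition 3.3]{HearingSerre}) and reduce each of its three summands modulo $q-1$. The guiding principle is Serre's Theorem~\ref{Serre}. Every measure occurring in the formula is a $\mu_\omega$-volume of a compact open set, or a finite sum of powers of such volumes. Here $\omega$ is the nowhere vanishing form attached to the integral structure $\Lambda$. Since the atlas is $O_K$-compatible, this form satisfies $\absolute{\omega(x)}=1$ in every chart. Hence every ball of radius $q^{-d}$ in $X$ has volume $q^{-nd}$, and the whole of $X$ has volume $i_0\,q^{-nd_0}$ for a suitable decomposition into balls. We then use the congruence $q\equiv 1 \mod q-1$, extended to $\mathds{Z}[q^{-1}]$, in which $q$ is a unit; so a congruence modulo $q-1$ makes sense for such numbers and $q^{\pm k}\equiv 1$.

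First we treat the term $\int_{X\setminus U(x)}d\mu_\omega=\mu_\omega(X)-\mu_\omega(U(x))$. The maximal balls of $X$ give a disjoint cover. If we take them all of the same size, which we may do by subdividing, their number is $\equiv i(X)$ by Theorem~\ref{Serre}. Each ball has volume a power of $q$, and so has the removed ball $U(x)$. Hence this term is congruent to $i(X)-1$ modulo $q-1$.

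Next we treat the middle integral over $U(x)\setminus B$. Working in a chart, we decompose $U(x)\setminus B$ along the path from $U(x)$ down to $B$. At each level $k$ there are $q^n-1$ sibling balls $B'$ of the next smaller radius not containing $B$. On each such $B'$ the supremum $x\wedge y$ is the ball at level $k$, so $\mu_\omega(x\wedge y)$ is constant there, equal to a power of $q$, namely $q^{-nk}$ up to normalisation. This level therefore contributes $(q^n-1)\,q^{-n(k+1)}\,q^{nks}$, which is divisible by $q^n-1$ and hence by $q-1$. We plan to check this carefully when $s$ is not an integer. There we would interpret the congruence for the real number $q^{nks}$ as the paper intends, namely modulo $(q-1)$ times the ring generated by such powers, or else note that a factor $q^n-1$ appears in each level's contribution. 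The sum over the finitely many levels is then $\equiv 0$.

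The last term is $\mu_\omega(B)^{n-s/n}$, a power of $q$, hence $\equiv 1$. Adding the three pieces gives $\lambda_\psi\equiv (i(X)-1)+0+1=i(X)$. The formula contains no $x$ beyond $B$, so independence of $x$ is inherited from the Proposition. We expect the main obstacle to be making the congruence meaningful for real exponents $s$. We would handle it by showing that $\lambda_\psi-i(X)$ lies in $(q-1)\,\mathds{Z}[q^{\pm s/n},q^{\pm1}]$, or a similar ring in which every power of $q$ is $\equiv 1$. A further point is to justify that the disjoint cover by balls of equal size realises the Serre invariant, with $U(x)$ being one of those balls. We would arrange this by choosing the atlas so that all maximal balls have the same radius.
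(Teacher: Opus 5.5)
For $s\in\mathds{Z}$ your argument is correct and complete. The paper proves the statement only by citing \cite[Theorem 4.2]{HearingSerre}, and your reduction of the three summands of the eigenvalue formula is the natural argument behind that result. The first summand is $\mu_\omega(X)-\mu_\omega(U(x))\equiv i(X)-1$ by Serre's theorem. Every level of the middle integral carries the factor $q^n-1$. The last summand $\mu_\omega(B)^{n-s/n}=q^{-n^2d+ds}$ is an integral power of $q$. So everything lies in $\mathds{Z}[q^{-1}]$, where the congruence has Serre's meaning. You need neither an equal-size cover nor a re-choice of atlas; the latter is not even allowed, since $U(x)$ and the kernel depend on $\mathcal{A}$. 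The facts $\mu_\omega(X)\equiv i(X)$ and $\mu_\omega(U(x))\equiv 1$ suffice.

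The genuine gap is non-integral $s$. You rightly flagged this as the main obstacle, but your proposed fix fails. Let $B$ have radius $q^{-d}$ and put $R=\mathds{Z}[q^{\pm1},q^{\pm s/n}]$. Your own computation gives $\lambda_\psi-i(X)\equiv\mu_\omega(B)^{n-s/n}-1\equiv q^{ds}-1 \bmod (q-1)R$. The factor $q^n-1$ kills the middle term, but nothing kills the $s$-dependence of the last one.

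A concrete counterexample: take $q=4$, $X=O_K$ (so $n=1$, $i(X)=1$, $U(x)=O_K$), $s=\tfrac12$, and $B$ of radius $4^{-d}$. Then $\lambda_\psi=\tfrac34\sum_{k=0}^{d-1}2^{-k}+2^{-d}=\tfrac32-2^{-d-1}$. Hence $\lambda_\psi-1=(2^d-1)/2^{d+1}$, and for odd $d$ this does not lie in $3\,\mathds{Z}[\tfrac12]=(q-1)R$. So the membership $\lambda_\psi-i(X)\in(q-1)R$ that you intend to prove is false. Enlarging $R$ until $q^{s}\equiv 1$ only coarsens the modulus to the ideal $(q-1,\,q^{s}-1)$, which in this example is the unit ideal.

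What can be proved for all real $s$ is a formal statement. Replace $q^{s}$ by an indeterminate $T$ and regard $\lambda_\psi$ as the Laurent polynomial in $\mathds{Z}[q^{-1}][T^{\pm1}]$ produced by the eigenvalue formula. Reduce it modulo $(q-1,T-1)$, whose quotient is $\mathds{Z}/(q-1)$. There your three-step computation goes through verbatim. Otherwise the theorem has to be restricted to $s\in\mathds{Z}$. This defect is inherited from the statement as formulated, but your proof as written does not resolve it.
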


\begin{proof}
\cite[Theorem 4.2]{HearingSerre}
\end{proof}


\section{Abelian varieties}

The following is often referred to as \emph{Lang's Theorem}:

\begin{thm}[Lang]\label{Lang}
Let $G$ be an algebraic group over a finite field with $q$ elements.
The map
\[
G\mapsto G,\;x\mapsto x^{q-1}
\]
is surjective.
\end{thm}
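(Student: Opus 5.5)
The plan is to read the statement in its intended sense, the \emph{Lang isogeny}, and then give the classical orbit argument. Let $G$ be a connected algebraic group over $\mathds{F}_q$, and let $F\colon G\to G$ be the $q$-power Frobenius endomorphism. The notation $x^q$ stands for $F(x)$, and $x^{q-1}$ stands for $x^{-1}F(x)$. We work with the points $G(\overline{\mathds{F}}_q)$, and we must prove that $L\colon x\mapsto x^{-1}F(x)$ is surjective there.

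Two remarks on the hypotheses. Connectedness is needed: for a finite constant group, $F$ is the identity, so $L$ is the constant map to $e$. In the application to the N\'eron model one then passes to the identity component, so it is harmless to assume $G$ connected, hence irreducible as a variety.

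The first step is to introduce the twisted conjugation action of $G$ on itself,
\[
g\cdot y = g^{-1}\,y\,F(g).
\]
The image of $L$ is exactly the orbit of the identity $e$. So it suffices to show that this action has only one orbit.

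The second step is to compute the differential of the orbit map $\rho_y\colon g\mapsto g^{-1}yF(g)$ at $g=e$. The key input is that $F$ raises coordinates to the $q$-th power, so its differential vanishes identically: $dF=0$ on every tangent space. Hence
\[
d\rho_y(X) = -Xy,
\]
where we identify tangent vectors via translation. Right translation by $y$ is an isomorphism of tangent spaces, so $d\rho_y$ is bijective at $e$. By equivariance, $d\rho_y$ is then bijective at every point of $G$.

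The third step converts this into a statement about orbits. An orbit map with everywhere surjective differential between smooth varieties of the same dimension is smooth, in particular open, and so every orbit is open. In our situation that dimension is $\dim G$. The orbits partition $G$, so the complement of an orbit is a union of orbits and is therefore also open. Thus every orbit is open and closed. Because $G$ is connected, there is only one orbit. In particular, the orbit of $e$ is all of $G$, which says precisely that $L$ is surjective.

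The main obstacle is the third step, namely justifying that bijectivity of the differential forces the orbit to be open. This is the only place where genuine algebraic geometry enters, and it is also where connectedness of $G$ is used. One route is to invoke generic smoothness together with Chevalley's theorem: the image of $\rho_y$ is constructible and has dimension $\dim G$, so it contains a dense open subset of the irreducible $G$. Two such dense open orbits in an irreducible space must intersect, and orbits that intersect coincide; so there is a single orbit, and it contains $e$. The route via density alone does not require the full openness statement.

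Finally, the result yields the finite-field consequence used later, $H^1(\mathds{F}_q,G)=0$ for connected $G$, and it applies in particular to the identity component of the special fibre of the N\'eron model.
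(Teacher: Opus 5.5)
Your proof is correct. The paper gives no argument of its own here: it only cites Lang's 1956 paper, so your text is a self-contained replacement for that citation rather than a variant of something in the article. Your route is the standard textbook proof of Lang's theorem, and it delivers what the citation is meant to deliver: $dF=0$, so each twisted-conjugation orbit map $g\mapsto g^{-1}yF(g)$ has bijective differential everywhere and is \'etale. Hence every orbit is open, and connectedness leaves a single orbit.

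What your version adds is precision about hypotheses that the paper's statement omits or states loosely.
\begin{itemize}
\item $G$ must be connected. Your finite constant group shows the statement as printed fails without this.
\item $x^{q-1}$ must be read as $x^{-1}F(x)$ on $\overline{\mathds{F}}_q$-points, not as a group power. In the abelian-variety application, multiplication by $q-1$ is a different map, and its surjectivity would not give exactness on $\mathds{F}_q$-points.
\item What is actually used later is $H^1(\mathds{F}_q,\mathcal{A}^\circ_{\mathds{F}_q})=0$ for the connected special fibre of the identity component. That is what makes the sequence of $\mathds{F}_q$-points exact, and it is the consequence you derive at the end.
\end{itemize}

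Two small points to tidy up.
\begin{itemize}
\item The tangent-space computation needs $G$ smooth. You may assume this by passing to $G_{\mathrm{red}}$ over the perfect field $\mathds{F}_q$, which has the same $\overline{\mathds{F}}_q$-points. It holds anyway for the N\'eron special fibre.
\item In your alternative route, ``generic smoothness'' is the wrong name for what you use. That theorem fails in characteristic $p$, and Frobenius itself is a counterexample. What you actually need is that a morphism of smooth varieties of the same dimension with bijective differential at a point is \'etale there, hence open near that point and so dominant. Chevalley's theorem then finishes the argument.
\end{itemize}
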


\begin{proof}
\cite[Corollary to Theorem 1]{Lang1956}.
\end{proof}

It will be used in what follows.
\newline 

Let $\mathcal{A}$ be the N\'eron model of an abelian variety $A$ of dimension $n$ over $K$, assumed to have  characteristic $p>0$.
By the N\'eron 
property of the $O_K$-model $\mathcal{A}$ of $A$, it follows that
\[
A(K)=\mathcal{A}(O_K)\,,
\]
and the \emph{canonical measure} $\mu_\omega$ on this $p$-adic analytic manifold  is given by a global section
\[
\omega\in\Gamma(\mathcal{A},\Omega^{n}_{\mathcal{A}/O_K})\,,
\]
and this is an invariant nowhere vanishing differential form. It arises naturally from the natural $O_K$-structure on $\mathcal{A}(O_K)$, cf.\ \cite[Proposition 3.2.15]{BKL2026}.
Denote the  identity component of $\mathcal{A}\to S$ with $S=\Spec(O_K)$ as $\mathcal{A}^\circ$. It sits inside
the exact sequence
\begin{align}\label{identityCompSeq}
0\to\mathcal{A}^\circ\to\mathcal{A}\to\Phi\to 0
\end{align}
in which $\Phi$ is the component group of $\mathcal{A}$. Cf.\ \cite[Chapter 7]{BLR1990} for more details.
\newline

The local $L$-function attached to the abelian variety $A$
is
\begin{align}\label{localL_AV}
L(A,t)=\det\left(1-F^{-1}t\mid H_{\etale}^1(A_{K_s},\mathds{Q}_\ell)^I\right)\in\mathds{Q}[t]\,,
\end{align}
where
$F\in\Gal(K_s/K)$ the Frobenius automorphism acting on 
the first $\ell$-adic cohomology group $H_{\etale}^1(A_{K_s},\mathds{Q}_\ell)$ 
with values in the constant sheaf $\mathds{Q}_\ell$ with $\ell$ a prime number distinct from $p$, and
\[
A_{K_s}=A\times_K K_s
\]
the base extension w.r.t.\ the separable closure $K_s$ of $K$.
Finally, $I\subset\Gal(K_s/K)$ is the inertia subgroup, defined as the kernel of the natural homomorphism 
\[
\rho\colon\Gal(K_s/K)\to\Gal(k_s/k)\,,
\]
where $k$ is the residue field of $K$, and $k_s$ that of $K_s$,
cf.\ \cite[Chapitre IV, \S1]{Serre1968}.
The set of fixed points under the action of $I=\ker(\rho)$ on $H_{\etale}^1(A_{K_s},\mathds{Q}_\ell)$ is denoted as:
\[
H_{\etale}(A_{K_s},\mathds{Q}_\ell)^I=\mathset{g\in H_{\etale}^1(A_{K_s},\mathds{Q}_\ell)\mid\forall \sigma\in I\colon g^\sigma=g}\,.
\]
The action of the inertia $I$ on the $\ell$-adic cohomology groups can be described along the lines of 
\cite[Section 2]{SZ2001} as follows: the Tate module 
\[
T_\ell(A)=\lim\limits_{\longleftarrow}{A}[\ell^m]
\]
appears in the $k$-th $\ell$-adic  cohomology group as follows:
\[
H_{\etale}^k(A_{K_s},\mathds{Z}_\ell)
=\Hom\left(\wedge^k\,T_\ell(A),\mathds{Z}_\ell\right)\,,
\]
which is a free $\mathds{Z}_\ell$-module of rank ${2n \choose k}$.
Since the Tate module for $\ell\neq p$ is a free $\mathds{Z}_p$-module, we thus have
\[
H_{\etale}^k(A_{K_s},\mathds{Q}_\ell)=H_{\etale}^k(A_{K_s},\mathds{Z}_\ell)\otimes_{\mathds{Z}_\ell}\mathds{Q}_\ell
\]
for $k\in\mathds{N}$. This means that the action on $\ell$-adic cohomology is defined entirely by its action on $H^1_{\etale}(A_{K_s},\mathds{Q}_\ell)$, which comes naturally from
the action of $\Gal(K_s/K)$ on the Tate module $T_\ell(A)$. This is an $\ell$-adic representation
\[
\Gal(K_s/K)\to\GL(T_\ell(A))\cong\GL_{2n}(\mathds{Z}_\ell)
\]
and induces the action of $I$ on $H^1_{\etale}(A_{K_s},\mathds{Q}_\ell)$. Now back to the sequence (\ref{identityCompSeq}), evaluated at $\mathds{F}_q$-rational points. From Theorem \ref{Lang} (Lang's Theorem), it follows that the sequence
\begin{align}\label{identityCompSeq_p}
\xymatrix{
0\ar[r]&\mathcal{A}^\circ(\mathds{F}_q)\ar[r]&\mathcal{A}(\mathds{F}_q)\ar[r]&\Phi(\mathds{F}_q)\ar[r]&0
}
\end{align}
is exact, and the quantity $\absolute{\Phi(\mathds{F}_q)}\in\mathds{N}$ is called the \emph{Tamagawa factor} w.r.t.\ the prime $p$.

\begin{Lemma}\label{Tamagawa}
It holds true that
$\mu_\omega(A(K))=\absolute{\Phi(\mathds{F}_q)}\cdot L(A,q^{-1})$.
\end{Lemma}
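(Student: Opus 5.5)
The plan is to compute $\mu_\omega(\mathcal{A}(O_K))$ by fibring over the special fibre, and then to identify the number of $\mathds{F}_q$-points of the identity component with the inverse local $L$-factor. This identity is classical; cf.\ the computation in \cite{Oesterle1984}. Since $\omega$ is a nowhere vanishing section of $\Omega^n_{\mathcal{A}/O_K}$ and $\mathcal{A}$ is smooth over $O_K$, Hensel's lemma makes the reduction map $\mathcal{A}(O_K)\to\mathcal{A}(\mathds{F}_q)$ surjective. Each fibre is an $O_K$-analytic ball: in local étale coordinates around a lift, with $\omega$ a unit multiple of $dx_1\wedge\dots\wedge dx_n$, it is $\pi O_K^n$ and has $\mu_\omega$-volume $q^{-n}$. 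Hence
\[
\mu_\omega(A(K))=\mu_\omega(\mathcal{A}(O_K))=q^{-n}\absolute{\mathcal{A}(\mathds{F}_q)}
=q^{-n}\absolute{\mathcal{A}^\circ(\mathds{F}_q)}\cdot\absolute{\Phi(\mathds{F}_q)}\,,
\]
where the last equality is the exactness of (\ref{identityCompSeq_p}), which rests on Lang's Theorem \ref{Lang}. It therefore remains to show that $q^{-n}\absolute{\mathcal{A}^\circ(\mathds{F}_q)}=L(A,q^{-1})$, with $L$ as in (\ref{localL_AV}).

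For this, use Chevalley's structure theorem: the special fibre $\mathcal{A}^\circ_k$ is an extension $0\to T\times U\to\mathcal{A}^\circ_k\to B\to 0$, with $B$ an abelian variety of dimension $b$, $T$ a torus of dimension $t$, and $U$ unipotent of dimension $u$, where $b+t+u=n$. By Lang's theorem again the point counts multiply:
\[
\absolute{\mathcal{A}^\circ(\mathds{F}_q)}=\absolute{B(\mathds{F}_q)}\,\absolute{T(\mathds{F}_q)}\,q^{u}\,.
\]
Each factor is then written via Frobenius. For the abelian part, $\absolute{B(\mathds{F}_q)}=\det(1-F_B\mid V_\ell B)$. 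For the torus, $\absolute{T(\mathds{F}_q)}=\det(q-F\mid X_*(T)\otimes\mathds{Q}_\ell)$, which equals $\det(1-F\mid V_\ell T)$ since $V_\ell T\cong X_*(T)\otimes\mathds{Q}_\ell(1)$.

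The key input is the theorem of Grothendieck (SGA 7, IX) and Serre–Tate: reduction induces an isomorphism $T_\ell(A)^I\cong T_\ell(\mathcal{A}^\circ_k)$, and this is an extension of $T_\ell B$ by $T_\ell T$ (the unipotent part has no $\ell$-torsion). Hence $\dim V_\ell(A)^I=2b+t$, and, using the duality $H^1=\Hom(T_\ell A,\mathds{Q}_\ell)$ recalled above, the characteristic polynomial of $F^{-1}$ on $H^1(A_{K_s},\mathds{Q}_\ell)^I$ is expressed through that of $F$ on $V_\ell(A)^I$. The normalisation has to be checked carefully here. Using the Weil pairing $V_\ell\cong V_\ell^\vee(1)$ on the $B$-part and the analogous relation on the torus part, the goal is the identity $L(A,q^{-1})=q^{-n}\det(1-F\mid V_\ell(A)^I)\cdot q^{u}$. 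Equivalently, this is the standard formula $\absolute{\mathcal{A}^\circ(\mathds{F}_q)}=q^{n}L(A,q^{-1})$, with the $q^{u}$ from the unipotent part absorbed as stated. Combining this with the first display gives the lemma.

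I expect the main obstacle to be this last bookkeeping step. One has to match the convention $\det(1-F^{-1}t\mid H^1)$ (geometric versus arithmetic Frobenius, and cohomology versus the Tate module) so that evaluation at $t=q^{-1}$ exactly produces $q^{-(b+t)}\absolute{B(\mathds{F}_q)}\absolute{T(\mathds{F}_q)}$, and not its reciprocal or a variant with a twist. I would verify this first on the two extreme cases: good reduction, which reduces to the Weil identity $\absolute{B(\mathds{F}_q)}=\prod(1-\alpha_i)$, and a split torus, where $\absolute{T(\mathds{F}_q)}=(q-1)^t$. This fixes the normalisation, and the general case then follows by multiplicativity over the Chevalley filtration.
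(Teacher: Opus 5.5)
Your first half is sound and is the paper's identities (I) and (II) in the other order: you apply Weil's fibre count to all of $\mathcal{A}$ and then split $\absolute{\mathcal{A}(\mathds{F}_q)}$ via Lang's theorem, whereas the paper counts on $\mathcal{A}^\circ$ and translates by coset representatives. The gap is in the remaining identity $\absolute{\mathcal{A}^\circ(\mathds{F}_q)}=q^{n}L(A,q^{-1})$. The paper simply quotes it from Conrad's notes, and your sketch does not establish it. Two of your claims there are false. First, $\absolute{T(\mathds{F}_q)}=\det(q-F\mid X_*(T)\otimes\mathds{Q}_\ell)$ is right, but with $\zeta_j$ the eigenvalues of $F$ on $X_*(T)$ one has $\det(1-F\mid V_\ell T)=\prod_j(1-q\zeta_j)=(-1)^t\det(F\mid X_*(T))\cdot\absolute{T(\mathds{F}_q)}$; for example it equals $1-q<0$ for $T=\mathds{G}_m$. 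Hence your ``goal identity'' $L(A,q^{-1})=q^{-n}q^{u}\det(1-F\mid V_\ell(A)^I)$ is not equivalent to the standard formula, and it is false: for a Tate curve (split multiplicative reduction, $t=1$) the left side is $1-q^{-1}$ and the right side is $q^{-1}-1$.

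Second, and more fundamentally, the duality $H^1=\Hom(V_\ell A,\mathds{Q}_\ell)$ does not turn $H^1(A_{K_s},\mathds{Q}_\ell)^I$ into a statement about $V_\ell(A)^I$. An $I$-invariant functional kills $(\sigma-1)V_\ell A$, so $(H^1)^I$ is dual to the \emph{coinvariants} $V_\ell(A)_I$. These can differ from $V_\ell(A)^I$ as Frobenius modules when inertia acts non-semisimply. For the Tate curve $V_\ell(A)^I=\mathds{Q}_\ell(1)$ but $V_\ell(A)_I=\mathds{Q}_\ell$, and dualising the invariants would give the factor $1-qt$, which vanishes at $t=q^{-1}$. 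Nor is there an ``analogous relation on the torus part'': $(V_\ell T)^\vee(1)=X^*(T)\otimes\mathds{Q}_\ell$ has root-of-unity Frobenius eigenvalues, while $V_\ell T$ has eigenvalues of absolute value $q$. So the graded pieces cannot be dualised separately.

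The missing idea is to dualise \emph{before} taking invariants. A polarization plus the Weil pairing gives a Galois-equivariant isomorphism $V_\ell A\cong(V_\ell A)^\vee(1)$, and since $\mathds{Q}_\ell(1)$ is unramified, $H^1(A_{K_s},\mathds{Q}_\ell)^I\cong V_\ell(A)^I(-1)$. As $F^{-1}$ acts on $\mathds{Q}_\ell(-1)$ by $q$ (with $F$ arithmetic), this gives $L(A,q^{-1})=\det(1-F^{-1}\mid V_\ell(A)^I)$, and now your Grothendieck--Chevalley step works. Since $\det(F\mid V_\ell B)=q^{b}$ and $\dim V_\ell B=2b$ is even, $\det(1-F^{-1}\mid V_\ell B)=q^{-b}\absolute{B(\mathds{F}_q)}$. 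Since the $\zeta_j$ are stable under inversion, $\det(1-F^{-1}\mid V_\ell T)=\prod_j(1-q^{-1}\zeta_j^{-1})=q^{-t}\absolute{T(\mathds{F}_q)}$. With $n=b+t+u$ this is exactly $L(A,q^{-1})=q^{-n}\absolute{\mathcal{A}^\circ(\mathds{F}_q)}$. Alternatively, just quote this formula, as the paper does.
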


\begin{proof}
This follows from the following identities:
\begin{align*}
\mu_\omega(A(K))&=\absolute{\Phi(\mathds{F}_q)}\cdot\mu_\omega(\mathcal{A}^\circ(O_K))&(I)
\\
\mu_\omega(\mathcal{A}^\circ(O_K))&=q^{-\dim(A)}\absolute{\mathcal{A}^\circ(\mathds{F}_q)}&(II)
\\
\absolute{\mathcal{A}^\circ(\mathds{F}_q)}&=q^{\dim(A)}\cdot L\!\left(A,q^{-1}\right)&(III)
\end{align*}
The first identity $(I)$ follows from the exactness of the sequence (\ref{identityCompSeq_p}).

\smallskip
Identity $(II)$ is Andr\'e Weil's result on counting points with the canonical measure \cite[Theorem 2.2.5]{WeilAAG}.

\smallskip
Identity $(III)$ is shown in the Lecture Notes \cite[Theorem 4.1]{Conrad2015}.
\end{proof}

\begin{Lemma}\label{LocalLAV}
It holds true that
$L(A,q^{-1})\equiv 1\mod q-1$.
\end{Lemma}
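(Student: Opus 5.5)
The plan is to trace the rational number $L(A,q^{-1})$ back to a point count over $\mathds{F}_q$ and work in the ring $\mathds{Z}_{(q-1)}$-style sense, where congruences between rationals whose denominators are powers of $q$ make sense. Since $q\equiv 1\mod q-1$, the integer $q$ is a unit modulo $q-1$, and $q^{-m}\equiv 1\mod q-1$ for every $m\in\mathds{N}$. In particular, every value of $L(A,t)\in\mathds{Q}[t]$ at $t=q^{-1}$ has a denominator prime to $q-1$. So the congruence in the statement is meaningful in the same sense as in Theorem \ref{Serre}, where $\int_X\absolute{\omega}=a/q^b$ is compared with $a$ modulo $q-1$.

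First step: use identity $(III)$ from the proof of Lemma \ref{Tamagawa} to write
\[
L(A,q^{-1})=q^{-\dim(A)}\absolute{\mathcal{A}^\circ(\mathds{F}_q)}\equiv\absolute{\mathcal{A}^\circ(\mathds{F}_q)}\mod q-1 .
\]
This reduces the claim to the integer congruence $\absolute{\mathcal{A}^\circ(\mathds{F}_q)}\equiv 1\mod q-1$. An equivalent route goes through the characteristic polynomial. Write $L(A,t)=\prod_i(1-\alpha_i^{-1}t)$ over the eigenvalues $\alpha_i$ of $F$ on the inertia invariants, and reduce the coefficients of $L(A,t)$ modulo $q-1$ after substituting $t=q^{-1}\equiv 1$. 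This gives $L(A,q^{-1})\equiv L(A,1)$, so one needs $\det(1-F^{-1})\equiv 1$ on $H^1_{\etale}(A_{K_s},\mathds{Q}_\ell)^I$.

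Second step: deduce the integer congruence from the group structure of $\mathcal{A}^\circ_{\mathds{F}_q}$. I would try Lang's Theorem \ref{Lang} here: the $(q-1)$-power map on the connected algebraic group $\mathcal{A}^\circ_{\mathds{F}_q}$ is surjective. From this I would aim to extract a free action of a group of order $q-1$ (or its orbit structure) on $\mathcal{A}^\circ(\mathds{F}_q)\setminus\{0\}$, which would give the count $\equiv 1$. For the toric part, the natural source of such a structure is the $\mathds{F}_q^\times$-action coming from the Chevalley decomposition of $\mathcal{A}^\circ_{\mathds{F}_q}$ into torus, unipotent and abelian parts. The unipotent part contributes a power of $q$, which is harmless. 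The torus part contributes $\prod(q^{d_j}-1)$-type factors, which vanish modulo $q-1$ unless the torus is trivial.

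The main obstacle is the abelian-variety part of the reduction, and I expect the argument to break down there. For an elliptic curve with good reduction, $\absolute{E(\mathds{F}_q)}=q+1-a_q\equiv 2-a_q\mod q-1$, which is not $1$ in general. Similarly, a split torus part makes the count $\equiv 0$. So before finalising, I would test the statement on these two cases. Most likely an additional hypothesis is needed, or $\absolute{\mathcal{A}^\circ(\mathds{F}_q)}$ must be kept as a separate term, which would change the reduction modulo $q-1$ in Corollary 3.5. Only in the purely unipotent (additive reduction) case does the argument above go through cleanly, since then $\absolute{\mathcal{A}^\circ(\mathds{F}_q)}=q^{\dim A}\equiv 1$.
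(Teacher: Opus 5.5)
Your proposal does not prove the lemma, but your suspicion is correct: the lemma is false as stated. Your first step also pinpoints where the paper's proof goes wrong. The paper argues that, because $q\equiv 1\mod q-1$, the number $L(A,q^{-1})$ is congruent to the \emph{constant term} of $L(A,t)$, which is $1$. But substituting $t=q^{-1}\equiv 1$ gives the sum of all coefficients, i.e.\ $L(A,1)=\det\left(1-F^{-1}\mid H^1_{\etale}(A_{K_s},\mathds{Q}_\ell)^I\right)$, exactly as in your reduction. The constant term $L(A,0)$ would only appear if $q^{-1}\equiv 0$.

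For the congruence to make sense, the coefficients of $L(A,t)$ must be integers, which they are; alternatively, argue via identity $(III)$ as you do. Membership in $\mathds{Q}[t]$ alone, as in your first paragraph, is not enough. Combined with $(III)$ this gives $L(A,q^{-1})\equiv\absolute{\mathcal{A}^\circ(\mathds{F}_q)}\mod q-1$, and your test cases are genuine counterexamples:
\begin{itemize}
\item Good reduction: take $K=\mathds{F}_5((T))$ and the constant curve $E\colon y^2=x^3+1$, so $\Phi$ is trivial. One counts $\absolute{E(\mathds{F}_5)}=6$, hence $L(E,t)=1+5t^2$ and $L(E,1/5)=6/5\equiv 2\mod 4$.
\item Split multiplicative reduction: $L(A,t)=1-t$ and $L(A,q^{-1})=(q-1)/q\equiv 0$, which is not $1$ once $q\geq 3$.
\end{itemize}

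The failure propagates. In the elliptic-curve example, $A(K)$ is a disjoint union of $6$ residue balls, so $i(A(K))=6\equiv 2\mod 4$, while $\absolute{\Phi(\mathds{F}_5)}=1$. This contradicts Corollary~\ref{SerreTamagawa} and, via Theorem~\ref{hearSerreMfp}, the main Corollary 3.5. What the argument actually yields is
\[
i(A(K))\equiv\absolute{\Phi(\mathds{F}_q)}\cdot\absolute{\mathcal{A}^\circ(\mathds{F}_q)}=\absolute{\mathcal{A}(\mathds{F}_q)}\mod q-1 .
\]
The lemma holds precisely when $\absolute{\mathcal{A}^\circ(\mathds{F}_q)}\equiv 1\mod q-1$. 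One instance is $H^1_{\etale}(A_{K_s},\mathds{Q}_\ell)^I=0$ (unipotent identity component, $L(A,t)=1$), which is the case you isolated.

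Your Lang-theorem idea in the second step therefore cannot be completed, and abandoning it was right. Note also that Theorem~\ref{Lang} as printed is not Lang's theorem, which concerns the map $x\mapsto x^{-1}F(x)$. On $\mathds{F}_q$-points the $(q-1)$-power map is far from surjective; it is trivial on $\mathds{G}_m(\mathds{F}_q)$.

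One small correction to a side remark of yours: a nontrivial torus need not contribute $0$ modulo $q-1$. Nonsplit multiplicative reduction gives $\absolute{\mathcal{A}^\circ(\mathds{F}_q)}=q+1\equiv 2$, which is simply one more counterexample.
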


\begin{proof}
Since $q\equiv 1\mod q-1$, the number $L(A,q^{-1})$ is congruent with the constant term of the polynomial $L(A,t)\in\mathds{Q}[t]$ modulo $q-1$. Since the latter equals $1$, the assertion follows.
\end{proof}

\begin{cor}\label{SerreTamagawa}
It holds true that $i(A(K))\equiv\absolute{\Phi(\mathds{F}_q)}\mod q-1$.
\end{cor}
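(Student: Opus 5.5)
The plan is to combine Serre's Theorem~\ref{Serre} with Lemma~\ref{Tamagawa} and Lemma~\ref{LocalLAV}. By Serre's theorem, we apply part 2 to the compact $p$-adic analytic $n$-manifold $X=A(K)=\mathcal{A}(O_K)$ and the canonical form $\omega$, which is nowhere vanishing. This gives $i(A(K))\equiv a\mod q-1$, where $\mu_\omega(A(K))=\int_X\absolute{\omega}=a/q^b$ with $a,b\in\mathds{N}$. The task is therefore to show $a\equiv\absolute{\Phi(\mathds{F}_q)}\mod q-1$.

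By Lemma~\ref{Tamagawa}, $a/q^b=\absolute{\Phi(\mathds{F}_q)}\cdot L(A,q^{-1})$. To get an honest integer identity, I would use identities $(I)$ and $(II)$ from the proof of Lemma~\ref{Tamagawa} directly:
\[
\mu_\omega(A(K))=\absolute{\Phi(\mathds{F}_q)}\cdot\absolute{\mathcal{A}^\circ(\mathds{F}_q)}\cdot q^{-n}.
\]
This gives a representation $a=\absolute{\Phi(\mathds{F}_q)}\cdot\absolute{\mathcal{A}^\circ(\mathds{F}_q)}$ with $b=n$. The representation $a/q^b$ is not unique, since multiplying $a$ and $q^b$ by powers of $q$ changes it. Because $q\equiv1\mod q-1$, however, all integer numerators of the same rational number with $q$-power denominators are congruent mod $q-1$. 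Theorem~\ref{Serre} is insensitive to that choice, so we may take this $a$.

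Next we use identity $(III)$: $\absolute{\mathcal{A}^\circ(\mathds{F}_q)}=q^n L(A,q^{-1})$. Write $L(A,t)=\sum_j c_jt^j$. The coefficients $c_j$ are integers, since the characteristic polynomial of Frobenius on inertia invariants has integer coefficients independent of $\ell$. The degree of $L(A,t)$ is at most $2n$, but $q^nL(A,q^{-1})=\sum_j c_jq^{n-j}$ may involve negative powers of $q$. So rather than expanding, I would argue directly: $\absolute{\mathcal{A}^\circ(\mathds{F}_q)}$ is a positive integer, and by Lemma~\ref{LocalLAV} together with $q\equiv1$ it is congruent to $L(A,q^{-1})\equiv1\mod q-1$. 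Here congruence is taken in $\mathds{Z}[1/q]$, in which $q$ is a unit and $q-1$ generates an ideal whose intersection with $\mathds{Z}$ is $(q-1)\mathds{Z}$, since $q$ is invertible modulo $q-1$. Hence $a\equiv\absolute{\Phi(\mathds{F}_q)}\cdot1\mod q-1$, and combined with Serre's theorem this yields the corollary.

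The main obstacle is making the congruence step rigorous when rational numbers with $q$-power denominators are involved, and checking that Lemma~\ref{LocalLAV} is valid in that ring. Lemma~\ref{LocalLAV} needs the $c_j$ to lie in $\mathds{Z}$, or at least in $\mathds{Z}_{(\text{primes dividing }q-1)}$, and needs $c_0=1$. I would verify integrality from the fact that the inertia-invariant part corresponds to the toric and abelian parts of the reduction, which are defined over $\mathds{F}_q$. Then I would work in the ring $\mathds{Z}[1/q]$, in which reduction modulo $q-1$ is well defined.
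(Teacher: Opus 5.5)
Your proposal follows the paper's route exactly (Theorem~\ref{Serre} plus Lemmas~\ref{Tamagawa} and~\ref{LocalLAV}). Your handling of the non-unique numerator $a$ and of congruences in $\mathds{Z}[1/q]$ is sound, and more careful than the paper. The step that fails is the one you import from Lemma~\ref{LocalLAV}. In the ring you chose, $q\equiv 1$ forces $q^{-1}\equiv 1$, not $q^{-1}\equiv 0$. Hence $L(A,q^{-1})=\sum_j c_jq^{-j}\equiv\sum_j c_j=L(A,1)\mod q-1$: the value is congruent to the sum of all coefficients, not to the constant term $c_0$. Checking integrality and $c_0=1$, as you plan, therefore does not address the real problem. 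What your argument actually establishes is
\[
i(A(K))\equiv\absolute{\Phi(\mathds{F}_q)}\cdot\absolute{\mathcal{A}^\circ(\mathds{F}_q)}\equiv\absolute{\Phi(\mathds{F}_q)}\cdot L(A,1)\mod q-1,
\]
and $L(A,1)\not\equiv 1$ in general.

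In fact the statement is false as stated, so this step cannot be repaired. Take $E\colon y^2=x^3+1$ over $K=\mathds{F}_5((t))$ (or $\mathds{Q}_5$). It has good reduction, so $\Phi$ is trivial. But $\absolute{E(\mathds{F}_5)}=6$ and $L(E,t)=1+5t^2$, so $E(K)$ is a disjoint union of $6$ balls and $i(E(K))\equiv 2\not\equiv 1\mod 4$. Similarly, consider a Tate curve $K^\times/\theta^{\mathds{Z}}$ with $v(\theta)=1$. Its component group is trivial, while $E(K)\cong O_K^\times$ is a union of $q-1$ balls, so $i(E(K))\equiv 0\not\equiv 1\mod q-1$ for $q\geq 3$. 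The same error sits in the paper's proof of Lemma~\ref{LocalLAV}, so the paper's corollary inherits it. The claimed congruence does hold when $\absolute{\mathcal{A}^\circ(\mathds{F}_q)}\equiv 1\mod q-1$, e.g.\ when the identity component of the special fibre is unipotent (then $L(A,t)=1$), but not in general.
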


\begin{proof}
This is an immediate consequence of Lemmas \ref{Tamagawa} and \ref{LocalLAV}, together with 
Serre's result (Theorem \ref{Serre}).
\end{proof}

The upshot is that Tamagawa factors can be heard modulo $q-1$ via $\Delta^s$.

\begin{cor}
Let $A/K$ be an abelian variety with N\'eron model $\mathcal{A}/O_K$. 
The wavelet eigenvalues $\lambda_\psi$ of the operator $\Delta^s$ acting on $\mathcal{D}(A(K))$ (for wavelets with sufficiently small support) satisfy
\[
\lambda_\psi\equiv\absolute{\Phi(\mathds{F}_q)}\mod q-1
\]
for $s\in\mathds{R}$.
\end{cor}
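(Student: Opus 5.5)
The plan is to chain Theorem \ref{hearSerreMfp} with Corollary \ref{SerreTamagawa}. The statement is about the compact $p$-adic analytic manifold $X=A(K)=\mathcal{A}(O_K)$, equipped with the canonical gauge form $\omega\in\Gamma(\mathcal{A},\Omega^n_{\mathcal{A}/O_K})$ and the natural integral structure $\Lambda$ coming from the $O_K$-structure of the N\'eron model (cf.\ \cite[Proposition 3.2.15]{BKL2026}).

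The first step is to check that $X$ satisfies the hypotheses needed to define $\Delta^s$ and to apply Theorem \ref{hearSerreMfp}.
\begin{itemize}
\item $X$ is compact, because $O_K$ is compact and $\mathcal{A}(O_K)$ is a closed subset of a finite union of affine $O_K$-points.
\item $X$ is a $p$-adic analytic $n$-manifold, because $\mathcal{A}$ is smooth over $O_K$.
\item $\Lambda$ arises from a finite $O_K$-compatible atlas by the Lemma relating integral structures and $O_K$-compatible atlases.
\item The measure $\mu_\omega$ attached to $\Lambda$ is the canonical measure, since $\omega$ is the top wedge of the local $O_K$-basis up to units of $O_K$, and these do not change $\absolute{\omega}$.
\end{itemize}
One must also make sure the nerve complex $N(\mathcal{A})$ is connected. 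If needed, I would arrange this by choosing the finite atlas so that the open sets overlap suitably, for instance by adjoining a chart that meets all the others. This affects only $d_\Lambda$, not the Serre invariant.

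The second step is to apply Theorem \ref{hearSerreMfp} to $(X,\omega)$. For wavelets $\psi$ with sufficiently small support and any $s\in\mathds{R}$, it gives $\lambda_\psi\equiv i(A(K))\mod q-1$.

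The third step is to invoke Corollary \ref{SerreTamagawa}, which gives $i(A(K))\equiv\absolute{\Phi(\mathds{F}_q)}\mod q-1$. Transitivity of congruence then yields the claim.

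The main obstacle is the compatibility in the first step. The $\omega$ defining the operator $\Delta^s$ (through $\Lambda$) must be the same nowhere vanishing form used in Lemma \ref{Tamagawa}, so that the two congruences refer to the same manifold and measure. I would handle this through the integrality of the N\'eron model: locally $\mathcal{A}$ is \'etale over $\mathds{A}^n_{O_K}$, so local coordinates give charts whose transition Jacobians lie in $\GL_n(O_K)$. In these coordinates $\omega$ is a unit multiple of $dx_1\wedge\dots\wedge dx_n$.

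If one only wants the congruence, the issue disappears. By Theorem \ref{Serre}, $i(X)$ is independent of the choice of nowhere vanishing $\omega$ modulo $q-1$, so any admissible integral structure gives the same residue.
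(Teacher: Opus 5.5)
Your argument is the paper's own: its proof is exactly Theorem~\ref{hearSerreMfp} combined with Corollary~\ref{SerreTamagawa} by transitivity. Your hypothesis checks only make explicit what the paper takes for granted. The real problem, however, is not the compatibility of $\omega$ with $\Lambda$ that you single out. It is the link you import from Corollary~\ref{SerreTamagawa}, which is false, so neither your proof nor the paper's establishes the statement.

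The error lies in Lemma~\ref{LocalLAV}. From $q\equiv 1\bmod q-1$ one gets $q^{-1}\equiv 1$ (working in $\mathds{Z}[1/q]$, where $q$ is a unit modulo $q-1$). So substituting $t=q^{-1}$ gives $L(A,q^{-1})\equiv L(A,1)$, the sum of all coefficients, not the constant term $L(A,0)=1$. What identities (I) and (II) in the proof of Lemma~\ref{Tamagawa}, the exactness of (\ref{identityCompSeq_p}) and Theorem~\ref{Serre} actually yield is $\mu_\omega(A(K))=\absolute{\mathcal{A}(\mathds{F}_q)}/q^{n}$, and hence
\[
i(A(K))\equiv\absolute{\mathcal{A}(\mathds{F}_q)}=\absolute{\Phi(\mathds{F}_q)}\cdot\absolute{\mathcal{A}^\circ(\mathds{F}_q)}\mod q-1 .
\]
You can also see this directly: by Hensel's lemma the reduction map $\mathcal{A}(O_K)\to\mathcal{A}(\mathds{F}_q)$ is surjective, and its fibres are balls isomorphic to $\pi O_K^n$.

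Since $\absolute{\mathcal{A}^\circ(\mathds{F}_q)}$ need not be $\equiv 1\bmod q-1$, the claimed congruence fails in general (granting Theorem~\ref{hearSerreMfp}). Take $E\colon y^2=x^3+1$ over $\mathds{Q}_5$ (or the constant curve over $\mathds{F}_5((t))$). Reduction is good, $\Phi$ is trivial, and $\absolute{\tilde E(\mathds{F}_5)}=6$. So $E(K)$ is a disjoint union of $6$ residue balls and $\lambda_\psi\equiv i(E(K))\equiv 2\bmod 4$, whereas $\absolute{\Phi(\mathds{F}_5)}=1$. Similarly, a Tate curve with $v(q_E)=1$ and $q>2$ has trivial $\Phi\cong\mathds{Z}/v(q_E)\mathds{Z}$. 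But $E(K)\cong O_K^\times$ is a disjoint union of $q-1$ balls, so $\lambda_\psi\equiv 0\not\equiv 1$.

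Thus the chain only proves $\lambda_\psi\equiv\absolute{\Phi(\mathds{F}_q)}\cdot\absolute{\mathcal{A}^\circ(\mathds{F}_q)}\bmod q-1$. This reduces to the stated congruence only when $\absolute{\Phi(\mathds{F}_q)}\,(\absolute{\mathcal{A}^\circ(\mathds{F}_q)}-1)\equiv 0\bmod q-1$.
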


\begin{proof}
This now follows immediately from
Theorem \ref{hearSerreMfp} and Corollary \ref{SerreTamagawa}.
\end{proof}

\section*{Acknowledgements}


Frank Herrlich, Stefan K\"uhnlein, \'Angel Mor\'an Ledezma, 
David Weisbart, Evgeny Zelenov, and Wilson Z\'u\~{n}iga-Galindo are thanked for valuable discussions. 

\bibliographystyle{plain}
\bibliography{biblio}

\end{document}